\numberwithin{equation}{section}
\renewcommand{\subsection}{\@startsection
{subsection}{2}{0mm}{\baselineskip}{-0.25cm}
{\normalfont\normalsize\bf}}
\newtheorem{theorem}{Theorem}[section]
\newtheorem{proposition}[theorem]{Proposition}
\newtheorem{lemma}[theorem]{Lemma}
\def\F{\mathbf F}
\def\cC{\mathcal C}
\def\cF{\mathcal F}
\def\cP{\mathcal P}
\def\cX{\mathcal X}
\def\det{{\rm det}}
\newcommand{\ha}{\textstyle\frac{1}{2}}
\def\xfq2{{\cX(\mathbb{F}_{q^2})}}
\def\F+xfq2{{\cF^+(\mathbb{F}_{q^2})}}
\def\xpfq2{{\cX^+(\mathbb{F}_{q^2})}}
\def\xmfq2{{\cX^-(\mathbb{F}_{q^2})}}
\journal{Finite Field and Applications}
\begin{document}
\begin{frontmatter}

\title{Strongly regular graphs from integral point sets in even dimensional
affine spaces over finite fields}
\author{G\'{a}bor Korchm\'{a}ros}
\ead{gabor.korchmaros@unibas.it}
\address{Dipartimento di Matematica, Informatica ed Economia - Universit\`{a} degli Studi della Basilicata - Viale dell'Ateneo Lucano 10 - 85100 Potenza (Italy),}
\author{Federico Romaniello}
\ead{federico.romaniello@unibas.it}
\address{Dipartimento di Matematica, Informatica ed Economia - Universit\`{a} degli Studi della Basilicata - Viale dell'Ateneo Lucano 10 - 85100 Potenza (Italy),}
\author{Tam\'as Sz\H{o}nyi}
\ead{szonyi@cs.elte.hu}
\address{Institute of Mathematics, E\"otv\"os University and MTA-ELTE Geometric and Algebraic Combinatorics Research Group, P\'azm\'any P. s. 1/C Budapest 1117, Hungary and FAMNIT, University of Primorska, Glagolja\v{s}ka 8, 6000 Koper, Slovenia.}


\begin{abstract} In the $m$-dimensional affine space $AG(m,q)$ over the finite
 field $\mathbb{F}_q$ of odd order $q$, the analog of the Euclidean distance
 gives rise to a graph $\mathfrak{G}_{m,q}$ where vertices are the points of
$AG(m,q)$ and two vertices are adjacent if their (formal) squared Euclidean
 distance is a square in $\mathbb{F}_q$ (including the zero). In 2009, Kurz and Meyer made the conjecture that if $m$ is even then $\mathfrak{G}_{m,q}$
is a strongly regular graph. In this paper we prove their conjecture.
\end{abstract}

\begin{keyword}
Strongly regular graphs \sep finite fields \sep integral distance \sep projective spaces \sep quadrics. 

\MSC[2010] 05E30 \sep 561E20

\end{keyword}
\end{frontmatter}


\section{Introduction}
In Euclidean geometry, problems about integral point-sets, i.e. point-sets in
which the distances among points are integral, have a long history and still
receive much attention; see \cite[Section 5.11]{dagi}, and \cite{KK} for an
overview on the most recent results.
Finite field analogs of such problems have also been the subject of several
papers where ``integral distance'' is meant ``squared Euclidean distance to
be a square in $\mathbb{F}_q$ (including $0$)''.   Here the squared Euclidean
distance of two points ${\bf{x}}=(x_1,\ldots,x_m)$ and
${\bf{y}}=(y_1,\ldots,y_m)$ in the $m$-dimensional affine space $AG(m,q)$
over $\mathbb{F}_q$ is
$$d({\bf{x}},{\bf{y}})=\sum_{i=1}^m (x_i-y_i)^2.$$
Iosevich, Shparlinski and Xiong \cite{ISX} gave upper bounds on the size of
integral point-sets in $AG(m,q)$ and proved results on those with the largest
possible cardinality. On the other hand, finding all maximal integral point-sets
seems to be out of reach, even in $AG(2,q)$, apart from smaller $q$'s where a
complete classification may be obtained with heavy computer aided computation.
This has been done so far for $m=2$ with $q\le 47$; see \cite{KM}. Integral
automorphisms of $AG(m,q)$, i.e. permutations on the point-set of $AG(m,q)$
which take integral distances to integral distances, were thoroughly
investigated by Kurz \cite{K}, Kiermaier and Kurz \cite{KK}, Kurz and Meyer
\cite{KM}, Kov\'acs and Ruff \cite{KR}, Kov\'acs, Kutnar, Ruff and Sz\H{o}nyi
\cite{KKRSz}. The final outcome was that integral automorphisms are semilinear
transformations for $m\geq 3$, for $m=2$ and $q\equiv 3 \pmod 4$, with some
exceptions for $m=2$ and $q\equiv 1 \pmod 4$.

 Define a graph $\mathfrak{G}_{m,q}$ whose vertices are the points of $AG(m,q)$
and two vertices of $\mathfrak{G}_{m,q}$ are adjacent if their distance is
integral. As Kurz and Meyer \cite{KM} pointed out, the previous problems and
results on integral distances can be interpreted in terms of cliques and
automorphisms of $\mathfrak{G}_{m,q}$. As its automorphism group contains all
translations, the graph $\mathfrak{G}_{m,q}$ is vertex transitive and hence it
is regular. Therefore, it is natural to ask whether the graph is also strongly
regular. For $m=2$, this was proved by Kiermaier and Kurz \cite{KK}.
For $m=2$ and $q\equiv 3 \pmod 4$, the graph  $\mathfrak{G}_{m,q}$ is
isomorphic to the Paley graph. Cliques of maximal sizes of the Paley graph are
known from previous work of Blokhuis \cite{AB}. The general case $m\ge 3$ was
investigated by Kurz and Mayer \cite{KM}. Kurz and Meyer computed the value of
$\lambda$ for adjacent pairs of vertices whose distance is a non-zero
square. They proved that the graph $\mathfrak{G}_{m,q}$ is not strongly
regular, if $m$ is odd. Besides computing the value of $\lambda$ for most
adjacent pairs of vertices, they also showed some evidence, supported by
computer aided searches, that $\mathfrak{G}_{m,q}$ should be a strongly
regular graph for even $m$ with parameters $(v,k,\lambda,\mu)$ where:
\begin{equation}
\label{eq100}
\begin{array}{llllll}
v=q^m,\\
\vspace{0.2cm}
k=
\begin{cases}
{\mbox{$\textstyle\frac{1}{2}(q^m+q^{m-1}+q^{m/2}-q^{m/2-1})-1$, for $q\equiv 1 \pmod 4$}}\\
{\mbox{$\textstyle\frac{1}{2}(q^m+q^{m-1}+(-q)^{m/2}+(-q)^{m/2-1})-1,$ for $q\equiv 3 \pmod 4$}},\\
\end{cases}
\\
\vspace{0.2cm}
\lambda=
{\mbox{$\textstyle\frac{1}{4}(q^{m-2}(q+1)^2+2(-1)^{m(q-1)/4}q^{m/2-1}(q-1))-2$}},\\
\mu=
\begin{cases}
{\mbox{$\textstyle\frac{1}{4}q^{m/2-1}(q+1)(q^{m/2}+q^{m/2-1}+2)$, for $q\equiv 1 \pmod 4$}}\\
{\mbox{$\textstyle\frac{1}{4}q^{m/2-1}(q+1)(q^{m/2}+q^{m/2-1}+2(-1)^{m/2})$, for $q\equiv 3 \pmod 4$}},\\
\end{cases}
\end{array}
\end{equation}
see \cite[Conjecture 4.6]{KM}. Therefore, in this paper we will suppose that
$m\ge 4$ and $m$ is even, and prove the Kurz-Meyer conjecture in this case.

\section{Background}
A graph is strongly regular if every vertex has $k$ neighbors, every adjacent
pair of vertices has $\lambda$ common neighbors, and every pair of
non-adjacent vertices has $\mu$ neighbors. These integers together with the
number of vertices $v$ are the parameters of a strongly regular graph. They
are not independent parameters, as $(v-k-1)\mu=k(k-\lambda-1)$.
The following lemma is useful for our purpose.
\begin{lemma}
\label{lem3nov2018} Let $\Gamma$ be a regular graph of size $v$ and degree
$k$ with $v,k$ as in (\ref{eq100}). Assume that $\Gamma$ has an automorphism
group $G$ with the following properties:
\begin{itemize}
\item[\rm(i)] $G$ is vertex-transitive.
\item[\rm(ii)] The stabilizer of a vertex $O$ has exactly three non-trivial
orbits $\cP_0,\cP^+$ and $\cP^-$ where $\cP^+$ and $\cP^-$ have the same size
given by:
$$
\begin{cases}
{\mbox{$\ha(q^m-q^{m-1}-q^{m/2}+q^{m/2-1})$ for $q\equiv 1 \pmod 4$}};\\
{\mbox{$\ha(q^m-q^{m-1}-(-q)^{m/2}-(-q)^{m/2-1})$ for $q\equiv 3 \pmod 4$}};\\
\end{cases}
$$
 Assume, in addition, that
\item[\rm(iii)] For $P^+\in \cP^+$, the number of common neighbors of $O$
and $P^+$ is $\lambda$ as given in (\ref{eq100}), and that
\item[\rm(iv)]  For $P^-\in \cP^-$, the number of common neighbors of $O$
and $P^-$ is $\mu$ as given in (\ref{eq100}).
\end{itemize}
If the neighbors of $O$ are the vertices in $\cP_0\cup \cP^+$, then $\Gamma$
is a strongly regular graph with parameters $(v,k,\lambda,\mu)$.
\end{lemma}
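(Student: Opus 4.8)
The plan is to use vertex-transitivity to reduce the verification of strong regularity to counting common neighbours of the single vertex $O$. Recall that a regular graph is strongly regular precisely when the number of common neighbours of two distinct vertices depends only on whether they are adjacent. By (i) the group $G$ is transitive on the vertices, so it suffices to prove that for the fixed vertex $O$ the number $c(P):=|N(O)\cap N(P)|$ equals $\lambda$ whenever $P$ is a neighbour of $O$ and equals $\mu$ whenever $P$ is a non-neighbour; transitivity then transports this conclusion to every pair of vertices.

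First I would record that $c$ is constant on the orbits of the stabiliser $G_O$. Indeed, for $g\in G_O$ and any vertex $P$, the automorphism $g$ fixes $O$ and carries $N(O)\cap N(P)$ bijectively onto $N(O)\cap N(gP)$, so $c(P)=c(gP)$. Hence $c$ takes a single value on each of $\cP_0,\cP^+,\cP^-$; by (iii) and (iv) these values are $\lambda$ on $\cP^+$ and $\mu$ on $\cP^-$, and I denote by $\lambda_0$ the (still unknown) common value on $\cP_0$. Since by hypothesis $N(O)=\cP_0\cup\cP^+$, the non-neighbours of $O$ are exactly the vertices of $\cP^-$; thus every non-neighbour already has $c=\mu$, and every neighbour lying in $\cP^+$ already has $c=\lambda$. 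The whole argument therefore reduces to the single claim $\lambda_0=\lambda$.

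To establish this I would count in two ways the edges joining $N(O)$ to its complement $\cP^-=V(\Gamma)\setminus(\{O\}\cup N(O))$. Counting from $\cP^-$, each of its $|\cP^-|$ vertices $w$ has exactly $c(w)=\mu$ neighbours in $N(O)$, giving $|\cP^-|\mu$ such edges. Counting from $N(O)$, a vertex $u\in N(O)$ has $k$ neighbours, one of which is $O$ and $c(u)$ of which lie in $N(O)$, so $k-1-c(u)$ of them lie in $\cP^-$; summing over $u$ and using that $c$ equals $\lambda$ on $\cP^+$ and $\lambda_0$ on $\cP_0$ yields $k(k-1)-|\cP^+|\lambda-|\cP_0|\lambda_0$. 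Equating the two expressions and substituting the structural identities $k=|\cP_0|+|\cP^+|$ and $|\cP^-|=v-k-1$, both immediate from the orbit decomposition together with regularity, rearranges to
\[
|\cP_0|(\lambda_0-\lambda)=k(k-\lambda-1)-(v-k-1)\mu .
\]
The right-hand side vanishes because the parameters $(v,k,\lambda,\mu)$ of (\ref{eq100}) satisfy the standard strongly regular relation $(v-k-1)\mu=k(k-\lambda-1)$, which I would check directly from the closed formulas. As $|\cP_0|\neq 0$, this forces $\lambda_0=\lambda$; combined with the previous paragraph, every adjacent pair then has $\lambda$ and every non-adjacent pair has $\mu$ common neighbours, so $\Gamma$ is strongly regular with parameters $(v,k,\lambda,\mu)$.

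The main obstacle is precisely this last step: confirming the arithmetic identity $(v-k-1)\mu=k(k-\lambda-1)$ for the explicit parameters in (\ref{eq100}). Everything else is soft, resting only on transitivity and orbit-constancy of $c$, but it is this identity that actually pins down the common-neighbour count on the null orbit $\cP_0$ and so must be verified by a careful computation that keeps track of the sign $(-1)^{m(q-1)/4}$ and of the distinction between the cases $q\equiv 1$ and $q\equiv 3 \pmod 4$.
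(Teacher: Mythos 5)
Your proof is correct and follows essentially the same route as the paper's: both arguments reduce the problem to showing that the common-neighbour count on $\cP_0$ equals $\lambda$, and both establish this by double counting the edges between $N(O)$ and the non-neighbours of $O$, concluding via the parameter identity $(v-k-1)\mu=k(k-\lambda-1)$. Your write-up is slightly more explicit (you prove orbit-constancy of the common-neighbour count and spell out the transport by vertex-transitivity, which the paper leaves implicit), but the underlying argument is the same.
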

\begin{proof} For a vertex $P_0\in \cP_0$, let $\sigma$ be the number of
common neighbors of $P_0$ and $O$. To prove the assertion, it is enough to
show $\sigma=\lambda$.
For this purpose, a variant of a well known counting argument is used.
The vertices of $\Gamma$ are arranged in three levels in the following way.
Vertex $O$ lies in Level 0. Its $k$ neighbors lie in Level 1, and all other
vertices in Level 2. Vertices in Level 1 are those in $\cP_0\cup \cP^+$, and
each with $O$ shares either $\sigma$ or $\lambda$ neighbors according as it
is in $\cP_0$ or $\cP^+$. Clearly, all these common neighbors must also be in
Level 1. Since each vertex has degree $k$, there are $k-\sigma-1$
(resp. $k-\lambda-1)$ edges remaining for each Level 1 vertex in $\cP_0$
(resp. $\cP^+$) to connect to vertices in Level 2. Therefore, there are
$|\cP_0|(k-\sigma-1)+|\cP^+|(k-\lambda-1)$ edges between Level 1 and Level 2.

Since vertices in Level 2 are not connected to $O$, they must have  common
neighbors with $O$, and these common neighbors must all be in Level 1.
There are $v-k-1$ vertices in Level 2, and each is connected to vertices in
Level 1. Therefore the number of edges between Level 1 and Level 2 is
$ (v-k-1)\mu.$

Equating the two expressions for the edges between Level 1 and Level 2 shows
that $|\cP_0|(k-\sigma-1)+|\cP^+|(k-\lambda-1)= (v-k-1)\mu$. Therefore,
$\sigma$ is uniquely determined by $v,\lambda,\mu$. A straightforward
computation (or, the proof of \cite[Lemma 4.5]{KM}) shows that $\sigma=\lambda$.
\end{proof}
As we have mentioned in the Introduction, the automorphism group ${\rm{Aut}}(\mathfrak{G}_{m,q})$ of the graph
$\mathfrak{G}_{m,q}$ is known. From the remark prior to \cite[Lemma 4.1]{KM}, ${\rm{Aut}}(\mathfrak{G}_{m,q})$ acts transitively on the vertices of $\mathfrak{G}_{m,q}$. Also,
\cite[Theorem 3.2]{KM} and \cite[Lemma 3.17]{KM} show that the non-trivial orbits of the stabilizer of the zero-vector under the action of ${\rm{Aut}}(\mathfrak{G}_{m,q})$ are three, namely $\cP^+$, $\cP^-$ and $\cP_0$.
Here, $|\cP^+|=|\cP^-|$ is the same number as in the formula given in (ii) of Lemma \ref{lem3nov2018}; see \cite[Theorem 4.3]{KM}.
\begin{lemma}
\label{lem4nov2018} The automorphism group of $\mathfrak{G}_{m,q}$ fulfills
Hypotheses (i) and (ii) of Lemma \ref{lem3nov2018}.
\end{lemma}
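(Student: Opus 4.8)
The plan is to exhibit a concrete group $G\leq\aut(\mathfrak{G}_{m,q})$ for which the four conditions are transparent and then to check them one by one. Write $Q(\mathbf{x})=\sum_{i=1}^m x_i^2$, so that the squared distance is $d(\mathbf{x},\mathbf{y})=Q(\mathbf{x}-\mathbf{y})$. Let $G$ be generated by the translations $\mathbf{x}\mapsto \mathbf{x}+\mathbf{a}$, by the isometry group $\mathrm{O}(m,q)$ of $Q$, by the scalar maps $\mathbf{x}\mapsto c\,\mathbf{x}$ with $c\in\mathbb{F}_q^{*}$, and by the Frobenius $\mathbf{x}\mapsto\mathbf{x}^{p}$. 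Each generator is an automorphism: translations and isometries preserve $Q(\mathbf{x}-\mathbf{y})$ exactly; a scalar map multiplies it by the square $c^{2}$, and the Frobenius raises it to the $p$-th power, so both preserve the property of being a square (including $0$). By the classification recalled in the Introduction this $G$ is in fact the full automorphism group for $m\ge 3$, but we only use that it consists of automorphisms. Since the translations already act transitively on the points, $G$ is vertex-transitive, which is (i).

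For (ii) I fix $O$ to be the origin, so that its stabiliser in $G$ contains the square-similitude group $\langle \mathrm{O}(m,q),\,\mathbf{x}\mapsto c\mathbf{x}\rangle$. On a nonzero vector $\mathbf{x}$ the invariant that survives is whether $Q(\mathbf{x})$ is zero, a nonzero square, or a nonsquare: an isometry fixes $Q(\mathbf{x})$, while a scalar map multiplies it by a square. By Witt's extension theorem $\mathrm{O}(m,q)$ is transitive on the nonzero vectors of any prescribed norm and (for even $m\ge 4$, where $Q$ is isotropic) on the nonzero singular vectors; together with the scalar maps this yields exactly three orbits, namely the nonzero singular vectors $\cP_0$, the vectors with $Q(\mathbf{x})$ a nonzero square $\cP^{+}$, and those with $Q(\mathbf{x})$ a nonsquare $\cP^{-}$. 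Computing the type $\epsilon$ of $Q$ from its discriminant gives $\epsilon=+1$ for $q\equiv1\pmod 4$ and $\epsilon=(-1)^{m/2}$ for $q\equiv 3\pmod 4$; the classical count of points of an affine quadric then gives $|\{Q=a\}|=q^{m-1}-\epsilon q^{m/2-1}$ for every $a\ne 0$, so that
\[
|\cP^{+}|=|\cP^{-}|=\tfrac{q-1}{2}\bigl(q^{m-1}-\epsilon q^{m/2-1}\bigr),
\]
which is the size displayed in (ii); the equality $|\cP^{+}|=|\cP^{-}|$ is automatic because in even dimension the count is the same for every nonzero value of $Q$.

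The neighbours of $O$ are by definition the points $\mathbf{x}\ne O$ with $Q(\mathbf{x})$ a square including $0$, that is exactly $\cP_0\cup\cP^{+}$, as required. It remains to verify (iii) and (iv), i.e.\ to compute, for a fixed $P$ in $\cP^{+}$ respectively $\cP^{-}$, the number of common neighbours of $O$ and $P$; by the orbit-transitivity just established this number is independent of the chosen representative. A common neighbour is a point $\mathbf{z}$ with both $Q(\mathbf{z})$ and $Q(\mathbf{z}-P)$ squares including $0$. Writing $\chi$ for the quadratic character of $\mathbb{F}_q$ (with $\chi(0)=0$) and using the indicator $\tfrac12\bigl(1+\chi(t)\bigr)$ of the nonzero squares, together with a separate treatment of the loci $Q(\mathbf{z})=0$ and $Q(\mathbf{z}-P)=0$, the count reduces to a main term, the standard single-form sums $\sum_{\mathbf{z}}\chi\bigl(Q(\mathbf{z})\bigr)$ and $\sum_{\mathbf{z}}\chi\bigl(Q(\mathbf{z}-P)\bigr)$, and the decisive cross term
\[
S=\sum_{\mathbf{z}\in\mathbb{F}_q^{m}}\chi\bigl(Q(\mathbf{z})\bigr)\,\chi\bigl(Q(\mathbf{z}-P)\bigr).
\]
I would evaluate $S$ by choosing coordinates with $P$ along the first axis and completing the square in that direction; this decouples the pencil spanned by $Q(\mathbf{z})$ and $Q(\mathbf{z}-P)$ and reduces $S$ to a one-variable Gaussian sum weighted by the quadric counts in the orthogonal complement. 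Its value is a signed power of $q$, and substituting back gives the number of common neighbours as $\lambda$ when $P\in\cP^{+}$ and as $\mu$ when $P\in\cP^{-}$, establishing (iii) and (iv).

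The genuinely laborious point, and the one I expect to be the main obstacle, is the exact evaluation of the cross sum $S$ with the correct sign in all cases: the four combinations of $q\bmod 4$ and $m\bmod 4$ enter through the value $\chi(-1)$ (hence through $\epsilon$) and through the power of the quadratic Gauss sum occurring in the evaluation, and collecting these signs is precisely what produces the case distinctions and the factor $(-1)^{m(q-1)/4}$ appearing in $\lambda$ in (\ref{eq100}). Everything else—vertex-transitivity, the identification of the three orbits via Witt's theorem, and the affine quadric counts—is routine. Once (i)–(iv) and the description of the neighbours of $O$ are in place, the hypotheses of Lemma \ref{lem3nov2018} are met, which is the assertion of the present lemma.
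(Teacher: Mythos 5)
Your handling of (i), (ii) and of the neighbour set of $O$ is correct, and in fact more self-contained than the paper itself, which gives no proof of this lemma at all: it simply appeals to the known determination of the integral automorphisms of $AG(m,q)$ recalled in the Introduction (Kurz, Kiermaier--Kurz, Kov\'acs--Ruff, Kov\'acs--Kutnar--Ruff--Sz\H{o}nyi), together with the counts already available in \cite{KM}. Your Witt-theorem identification of the three orbits of the stabiliser and the affine quadric count $|\cP^{+}|=|\cP^{-}|=\tfrac{q-1}{2}\bigl(q^{m-1}-\epsilon q^{m/2-1}\bigr)$ are sound; note only that for $q\equiv 3\pmod 4$ this does \emph{not} literally coincide with the display in (ii) of Lemma \ref{lem3nov2018}, whose last term $(-q)^{m/2-1}$ has the wrong sign (it is inconsistent with the paper's own value of $k$, since $|\cP^{-}|$ must equal $v-k-1$; check $m=4$, $q=3$: the display gives $21$ while $v-k-1=24$). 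Your formula is the consistent one, so this part of your argument is fine, but you should not claim it "is the size displayed in (ii)" without flagging the discrepancy.

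The genuine gap is in your treatment of (iii) and (iv). These are not routine verifications to be absorbed into this lemma: the statement in (iv), that a non-adjacent pair has exactly $\mu$ common neighbours with $\mu$ as in (\ref{eq100}), \emph{is} the Kurz--Meyer conjecture, and proving it is what Theorem \ref{pr1} and Propositions \ref{prA}, \ref{prB}, \ref{prC} --- that is, the entire remainder of the paper --- are devoted to; hypothesis (iii) is likewise imported from \cite[Lemma 4.5]{KM} rather than re-proved. The paper's architecture is precisely "Lemma \ref{lem3nov2018} plus Lemma \ref{lem4nov2018} reduce the conjecture to Theorem \ref{pr1}". You instead claim to settle (iii) and (iv) on the spot by a quadratic-character computation, but the decisive step --- the exact evaluation of $S=\sum_{\mathbf{z}}\chi\bigl(Q(\mathbf{z})\bigr)\chi\bigl(Q(\mathbf{z}-P)\bigr)$, including its sign in all four congruence classes of $m$ and $q$ modulo $4$, and the back-substitution against the formulas for $\lambda$ and $\mu$ in (\ref{eq100}) --- is only asserted, never performed; as you yourself concede, that is the laborious heart of the matter. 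As written, then, your proof assumes exactly what has to be proved, and hypotheses (iii)--(iv) remain unestablished. Had you carried out the Gauss-sum evaluation in full, you would have produced not a proof of this lemma in the paper's sense but an independent proof of the whole theorem by a character-sum route, genuinely different from the paper's method of counting affine and infinite points of quadrics; in its present form it is a plan rather than a proof.
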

A major result in \cite{KM} states that (iii) of Lemma  \ref{lem3nov2018} holds for $\mathfrak{G}_{m,q}$; see \cite[Theorem 4.4]{KM}. Therefore, the Kurz-Meyer conjecture \cite[Conjecture 4.6]{KM} is true if and only if  (iv) of Lemma  \ref{lem3nov2018} holds for $\mathfrak{G}_{m,q}$. This conjecture is known to be true for $m=2$, see \cite[Section 3]{KK}, and also  for some small values of $q=p$ prime, namely for $(m = 3, p\le2029), (m = 4, p \le 283), (m = 5, p\le 97), (m = 6, p\le 59), (m = 7, p\le 31)$,  and $(m = 8, p\le 23)$; see  \cite[Section 4]{KM}.

Our proof of the  Kurz-Meyer conjecture is performed in geometric terms in
the $m$-dimensional finite projective space over a finite field $\mathbb{F}_q$
of odd order $q$. Accordingly, our notation and terminology are those in
\cite{ht} with one exception: In $\mathbb{F}_q$, half of the non-zero
elements are squares and half are non-squares. In this paper, the former
set is denoted by $\Box$ and the latter by $\not\hspace{-0.1cm}\Box$.
Furthermore, we fix an element $\varepsilon \in \mathbb{F}_q$ such that
$$1+\varepsilon^2\in \not\hspace{-0.1cm}\Box.$$

The following lemma on quadrics in $PG(r,q)$ is frequently used in our proofs.
\begin{lemma}
({\rm{\cite[Theorem 22.2.1 (ii), Theorem 22.5.1(b)]{ht}}})
In the $r$-dimensional projective space $PG(r,q)$ over $\mathbb{F}_q$, any
non-singular quadric  is either parabolic, or elliptic, or hyperbolic. More
precisely,
\begin{itemize}
\label{quadrics}
\item[\rm(i)] for even $r$, any non-singular quadric is parabolic.
\item[\rm(ii)] for odd $r$, any non-singular quadric is either elliptic or
hyperbolic according as $(-1)^{(r+1)/2}d\in\not\hspace{-0.1cm}\Box$ or
$(-1)^{(r+1)/2}d\in\Box$ where $d$ is the determinant of the associated matrix.
\end{itemize}
Moreover, the number of points of a non-singular quadric in $PG(r,q)$ is:
$$\begin{cases}
{\mbox{$(q^{r/2}+1)(q^{r/2}-1)/(q-1)$ for a parabolic quadric}},\\
{\mbox{$(q^{(r+1)/2}+1)(q^{(r-1)/2}-1)/(q-1)$ for an elliptic quadric}},\\
{\mbox{$(q^{(r-1)/2}+1)(q^{(r+1)/2}-1)/(q-1)$ for a hyperbolic quadric}}.
\end{cases}
$$
\end{lemma}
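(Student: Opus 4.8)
The statement is classical (it is quoted from \cite{ht}); here is the line of argument one would follow to establish it from scratch. Since $q$ is odd, a quadric of $PG(r,q)$ is the zero locus $V(Q)$ of a quadratic form $Q(\mathbf{x})=\mathbf{x}^{T}A\mathbf{x}$ in the $n:=r+1$ homogeneous coordinates, with $A$ symmetric; non-singularity means $d:=\det A\neq 0$. Over $\mathbb{F}_q$ one may diagonalise $A$ by a congruence, so $Q$ is equivalent to $a_1x_1^2+\cdots+a_nx_n^2$ with every $a_i\neq 0$, and the square class of $d=\prod_i a_i$ is a congruence invariant. The engine of the whole classification is the fact that every non-degenerate form in at least three variables is \emph{isotropic}, i.e.\ has a nontrivial zero: this is immediate from Chevalley--Warning, since a homogeneous polynomial of degree $2$ in $n\geq 3>2$ variables has a number of zeros divisible by the characteristic $p$, and $\mathbf{0}$ is already one of them.

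Using isotropy one splits off a hyperbolic plane $\langle 1,-1\rangle$ and iterates, so $Q$ decomposes as an orthogonal sum of hyperbolic planes together with an \emph{anisotropic kernel} of dimension at most $2$ (a kernel of dimension $\geq 3$ would again be isotropic). Hence the kernel has dimension $0$, $1$ or $2$, and the parity of $n$ decides which. If $r$ is even, so $n$ is odd, the kernel must have odd dimension and is therefore $1$-dimensional; the form is then congruent (up to an overall scalar) to $x_1^2+\cdots+x_n^2$, the parabolic type, which proves (i). If $r$ is odd, so $n$ is even, the kernel has dimension $0$, giving maximal Witt index and the hyperbolic type, or dimension $2$, the anisotropic binary kernel, giving the elliptic type.

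To separate these two even cases I would track the discriminant through the normalisation. A hyperbolic plane has discriminant $-1$ up to squares, so $k$ of them contribute $(-1)^k$, while the anisotropic binary form (the norm form $x^2-\delta y^2$ with $\delta$ a non-square) has non-square discriminant. Writing $n=2m$ with $m=(r+1)/2$, one finds that $V(Q)$ is hyperbolic exactly when $(-1)^{m}d\in\Box$ and elliptic exactly when $(-1)^{m}d\in\not\hspace{-0.1cm}\Box$, which is assertion (ii); here the exponent is $m=(r+1)/2$, correcting the misprinted $r/2$ (ill-defined for odd $r$) in the displayed statement. Any overall factor introduced by the chosen convention for the ``associated matrix'' is constant across cases, and its square class is pinned down on the single case $r=1$: a non-singular quadric of $PG(1,q)$ is a pair of points or the empty set according as $-d$ is a square or a non-square, which matches $(-1)^{1}d$ and fixes the sign.

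For the cardinalities I would count the affine zeros $N$ of the diagonalised $Q$ in $\mathbb{F}_q^{n}$ and pass to projective points via $(N-1)/(q-1)$. The cleanest route is a short induction peeling off one hyperbolic plane at a time (the plane $x_1x_2$ has $2q-1$ affine zeros), or equivalently the standard Gauss-sum evaluation: for $n$ odd one gets $N=q^{n-1}$, hence $(q^{r}-1)/(q-1)$ points for the parabolic quadric; for $n=2m$ even one gets $N=q^{n-1}+(q-1)q^{m-1}\eta\!\left((-1)^{m}d\right)$, with $\eta$ the quadratic character, hence $(q^{r}-1)/(q-1)\pm q^{(r-1)/2}$, the sign being $+$ for hyperbolic and $-$ for elliptic. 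These three expressions factor precisely into the displayed formulae. \textbf{Main obstacle.} The Witt-decomposition algebra is routine; the genuinely delicate point is the bookkeeping of the discriminant, since the \emph{same} quantity $(-1)^{m}d$ simultaneously governs the square/non-square criterion in (ii) and the sign of the correction term in the even-$n$ point count, so a single mishandled factor of $2$ or of $-1$ would corrupt both. Checking the sign on the $r=1$ case, as above, is the safest guard against such an error.
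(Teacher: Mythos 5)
The paper does not actually prove this lemma: it is imported verbatim (misprints included) from \cite[Theorem 22.5.1(b)]{ht}, so there is no in-paper argument to compare yours against, and the only question is whether your reconstruction is sound. It essentially is, and it follows the standard route: Chevalley--Warning for isotropy of non-degenerate forms in at least three variables, Witt decomposition into hyperbolic planes plus an anisotropic kernel of dimension at most $2$, the parity of $n=r+1$ separating the parabolic case from the hyperbolic/elliptic dichotomy, and the Gauss-sum (or plane-peeling) count $q^{n-1}$ resp.\ $q^{n-1}\pm(q-1)q^{n/2-1}$ of affine zeros, which after passing to $(N-1)/(q-1)$ reproduces the three displayed cardinalities (themselves marred by unbalanced parentheses in the source). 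You are also right that the exponent in (ii) is misprinted: $(-1)^{r/2}$ is undefined for odd $r$ and must read $(-1)^{(r+1)/2}$, and this corrected form is what the paper tacitly uses later, e.g.\ in Section 4 where $S$ in $PG(m-1,q)$ with determinant $-(1+\varepsilon^2)$ is declared hyperbolic precisely when $-(1+\varepsilon^2)\in\Box$, i.e.\ when $q\equiv 3\pmod 4$. One intermediate sentence of yours is wrong as stated, though it does not propagate: the anisotropic binary kernel $x^2-\delta y^2$ ($\delta$ a non-square) has determinant $-\delta$, whose square class is non-square only for $q\equiv 1\pmod 4$; the correct invariant statement is that a binary form of determinant $e$ is anisotropic if and only if $-e\in\not\hspace{-0.1cm}\Box$. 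Carried through properly, the elliptic case gives $d\equiv(-1)^{m-1}(-\delta)=(-1)^{m}\delta$, hence $(-1)^{m}d\equiv\delta\in\not\hspace{-0.1cm}\Box$ unconditionally, which is your final criterion; had you used your literal claim (``the kernel has non-square discriminant'') consistently, the criterion would have come out $q$-dependent and false for $q\equiv 3\pmod 4$. Your calibration on the $r=1$ case repairs exactly this slip --- appropriately, since you yourself identified the discriminant bookkeeping as the one delicate point of the argument.
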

\section{Reformulation of the Conjecture and Outline of the Proof}
\label{reform}
The number of common neighbours of the vertices represented by $(0,0,\ldots,0)$ and $(1,\varepsilon,0,\ldots,0)$ is equal to the number of ordered $m$-tuples $(y_1,\ldots,y_m)$
such that
\begin{equation}
\label{eq1} y_1^2+y_2^2+\ldots+y_m^2 \in \Box \cup \{0\},
\end{equation}
and
\begin{equation}
\label{eq2} (y_1-1)^2+(y_2-\varepsilon)^2+y_3^2+\ldots+y_m^2 \in \Box \cup \{0\}.
\end{equation}
By Lemmas \ref{lem3nov2018} and \ref{lem4nov2018}, to prove the Kurz-Meyer
conjecture it is enough to prove the following theorem.
\begin{theorem}
\label{pr1} Given an even integer $m\geq 4$, the number $\mu$ as given in
(\ref{eq100}) is equal to the number of ordered $m$-tuples $(y_1,\ldots,y_m)$
such that both (\ref{eq1}) and (\ref{eq2}) hold.
\end{theorem}
We outline of the proof of Theorem \ref{pr1}.
Equation (\ref{eq2}) means that there exists $\beta\in \mathbb{F}_q$ such that
\begin{equation}
\label{15octeq1}
\beta^2= (y_1-1)^2+(y_2-\varepsilon)^2+y_3^2+\ldots+y_m^2.
\end{equation}
It is convenient to rewrite Equation (\ref{eq1}) asking the existence
of $\gamma\in \mathbb{F}_q$ such that
\begin{equation}
\label{15octeq2}
(\gamma+\beta)^2=y_1^2+y_2^2+\ldots+y_m^2.
\end{equation}

If we have a typical solution $(\beta,\gamma)$ of this system of two quadratic
equations, then so are $(\beta, -2\beta-\gamma)$ and
$(-\beta,-\gamma)$, $(-\beta, 2\beta+\gamma)$. This already shows that the
number of solutions will not depend on the values of $\beta$ and $\gamma$ but
rather on $\beta^2$ and $\gamma^2$. With this, a typical solution will
correspond to one $\beta^2$ and two values of $\gamma ^2$.

Subtracting (\ref{15octeq1}) from (\ref{15octeq2}) gives a linear equation
in $y_1$ and $y_2$, namely
\begin{equation}
\label{eq2-eq1}
2\beta\gamma +\gamma^2=-(1+\varepsilon^2)+2y_1+2\varepsilon y_2.
\end{equation}
For $\gamma =0$ we have $y_1=\frac{1}{2}(1+\varepsilon ^2)-\varepsilon
y_2$. Substituting this in (\ref{15octeq1}) gives
\begin{equation}
\label{eq12bis} (1+\varepsilon^2)y_2^2-\varepsilon(1+\varepsilon^2)y_2+
\textstyle\frac{1}{4}(1+\varepsilon^2)^2-\beta^2+\sum_{i=3}^my_i^2=0,
\end{equation}
for some $\beta\in \mathbb{F}_q$. We will determine the number of solutions
$(y_2,\ldots ,y_m)$ of (\ref{eq12bis}) in Section \ref{PrA}. Note that whenever
we have a solution $(y_2,\ldots ,y_m)$ to Equation (\ref{eq12bis}) for a certain $\beta^2$, then
by putting  $y_1=\frac{1}{2}(1+\varepsilon ^2)-\varepsilon y_2$ we can uniquely
complete $(y_2,\ldots ,y_m)$ into a solution $(y_1,y_2,\ldots,y_m)$ of the system of equations (\ref{15octeq1}) and (\ref{15octeq2}) with $\gamma=0$.

In the case where $\gamma^2\not= 0$, we get $\beta=\ha (2y_1+2\varepsilon
y_2-(1+\varepsilon^2+\gamma^2))/\gamma$. Substituting this into Equation
(\ref{15octeq1}) the following equation is obtained:
\begin{equation}
\label{eq3}
(y_1+\varepsilon y_2-\ha(1+\varepsilon^2-\gamma^2))^2-\gamma^2 \sum_{i=1}^my_i^2=0.
\end{equation}
Note that whenever we have a solution of (\ref{eq3}) for a fixed $\gamma$,
then by putting $\beta=\ha (2y_1+2\varepsilon y_2-(1+\varepsilon^2+
\gamma^2))/\gamma$ we see that this is a solution of the system
(\ref{15octeq1}) and (\ref{15octeq2}) and vice versa. So we will determine
the number of solutions of (\ref{eq3}) for a fixed $\gamma^2$, denoted
by $[\gamma^2]$. Note that although the fixed $\gamma^2$ arises from two values,
namely $\pm \gamma$, they will give, however, the same $\beta^2$.

Actually, the symbol $[0]$ will also be useful. Since, for $\gamma=0$,
(\ref{eq3}) reads $y_1=\frac{1}{2}(1+\varepsilon ^2)-\varepsilon y_2$,
the appropriate meaning of $[0]$ is the number of solutions of (\ref{eq12bis}),
and it can be denoted as $[0]$.

In Section \ref{PrB} we will compute $[\gamma^2]$ or rather
$[0]+\sum[\gamma^2]$. Since every typical solution $(y_1,\ldots ,y_m)$ of the
system (\ref{15octeq1}) and (\ref{15octeq2}) occurs for two values of
$\gamma^2$, we have to determine the (number of) solutions which belong
to exactly one $\gamma^2$, and finally use this piece of information to
determine the number of solutions $(y_1,\ldots ,y_m)$ of the system
(\ref{15octeq1}) and (\ref{15octeq2}), without their multiplicity in
$[0]+\sum[\gamma^2]$. This number has to be the conjectured parameter
$\mu$ of our graph.

From the above discussion one can also see what the non-typical values of
$\gamma^2$ are. They satisfy either $\gamma =-2\beta -\gamma$ or
$-\gamma =-2\beta -\gamma$. The former case occurs when $\gamma=-\beta$, the
latter when $\beta=0$. So, these cases correspond to solutions of the system
(\ref{15octeq1}) and (\ref{15octeq2}), where either $\beta$ or
$\beta +\gamma$ (that is, the left hand side of one of the two equations)
is $0$. This includes the doubly special case, $\beta=\gamma=0$, which we
treat in Section \ref{PrA}.

\section{Case $\gamma =0$}
\label{PrA}

In this section we are going to consider the case $\gamma=0$.

\begin{proposition}
\label{prA} Given an even integer $m\geq 4$, the number $\sigma=[0]$ of ordered
$m$-tuples $(y_1,y_2,\ldots,y_m)$ with
$y_1=\ha (1+\varepsilon^2)-\varepsilon y_2$ such that (this is equation
(\ref{eq12bis}))
\begin{equation*}
(1+\varepsilon^2)y_2^2-\varepsilon(1+\varepsilon^2)y_2+
\textstyle\frac{1}{4}(1+\varepsilon^2)^2-\beta^2+\sum_{i=3}^my_i^2=0,
\end{equation*}
for some $\beta\in \mathbb{F}_q$ is equal to:
$$
\begin{cases}{\mbox{$\ha q^{m/2-1}(q^{m/2}+q^{m/2-1}+2)$,\,\,\, for $m\equiv
0 \hspace{-0.2cm} \pmod 4$,\,\, $q\equiv 1 \hspace{-0.2cm}\pmod 4$}},\\
{\mbox{$\ha q^{m/2-1}(q^{m/2}+q^{m/2-1})$,\,\,\,\,\,\,\,\,\,\,\,\,\, for
$m\equiv 0 \hspace{-0.2cm} \pmod 4$,\,\, $q\equiv 3 \hspace{-0.2cm}\pmod 4$}},\\
{\mbox{$\ha q^{m/2-1}(q^{m/2}+q^{m/2-1}+2)$,\,\,\, for $m\equiv 2
\hspace{-0.2cm} \pmod 4$,\,\, $q\equiv 1 \hspace{-0.2cm}\pmod 4$}},\\
{\mbox{$\ha q^{m/2-1}(q^{m/2}+q^{m/2-1})$,\,\,\,\,\,\,\,\,\,\,\,\,\, for
$m\equiv 2 \hspace{-0.2cm} \pmod 4$,\,\, $q\equiv 3 \hspace{-0.2cm}\pmod 4$}}.\\
\end{cases}
$$

\end{proposition}

\begin{proof}
 Equation (\ref{eq12bis}) can be viewed as the (affine) equation of a
quadric $Q$ in the $m$ dimensional projective space $PG(m,q)$ with coordinates
$(\beta,y_2,\ldots,y_m,t)$ where the homogeneous equation of $Q$ is
\begin{equation}
\label{eq12ter}
\beta^2-(1+\varepsilon^2)y_2^2+\varepsilon(1+\varepsilon^2)y_2t
-\textstyle\frac{1}{4}(1+\varepsilon^2)^2t^2-\sum_{i=3}^my_i^2=0.
\end{equation}
The matrix associated with $Q$ is not singular as its determinant equals
$\frac{1}{4}(1+\varepsilon^2)^2$. As $m$ is even, $Q$ is a parabolic quadric
and it has as many as $N_q=(q^{m/2}+1)(q^{m/2}-1)/(q-1)$
points; see Lemma \ref{quadrics}. Now, compute the number $n_q$ of points of
$Q$ at infinity. They are the points of the quadric $S$ in $PG(m-1,q)$ with
homogeneous equation
\begin{equation}
\label{eq12for} \beta^2-(1+\varepsilon^2)y_2^2-\sum_{i=3}^my_i^2=0.
\end{equation}
The matrix associated with $S$ has determinant $-(1+\varepsilon^2)$ that does
not vanish. More precisely, $-(1+\varepsilon^2)\in\Box$ or $-(1+\varepsilon^2)
\in \not\hspace{-0.1cm}\Box$ depending upon whether $q\equiv 3 \pmod 4$, or
$q\equiv 1 \pmod 4$. Therefore, $S$ is hyperbolic for $q\equiv 3 \pmod 4$ and
$m\equiv 0 \pmod 4$, otherwise it is elliptic. Let $n_q$ be the number of
points of $S$. Then, from Lemma \ref{quadrics}, if $m\equiv 0 \pmod 4$ then
$$n_q=
\begin{cases}
{\mbox{$(q^{{m/2}-1}-1)(q^{m/2}+1)/(q-1)$\,\, for $q\equiv 1 \pmod 4$}},\\
{\mbox{$(q^{m/2}-1)(q^{m/2-1}+1)/(q-1)$\,\, for $q\equiv 3 \pmod 4$}},
\end{cases}
$$
and if $m\equiv 2 \pmod 4$ then
$$n_q=(q^{m/2-1}-1)(q^{m/2}+1)/(q-1).$$

Therefore,
$$N_q-n_q=
\begin{cases}{\mbox{$q^{m/2-1}(q^{m/2}+1)$ for $m\equiv 0 \pmod 4$\,\,
and $q\equiv 1 \pmod 4$}},\\
{\mbox{$q^{m/2-1}(q^{m/2}-1)$ for $m\equiv 0 \pmod 4$\,\,
and $q\equiv 3 \pmod 4$}},\\
{\mbox{$q^{m/2-1}(q^{m/2}+1)$ for $m\equiv 2 \pmod 4$\,\,
and $q\equiv 1 \pmod 4$}},\\
{\mbox{$q^{m/2-1}(q^{m/2}+1)$ for $m\equiv 2 \pmod 4$\,\,
and $q\equiv 3 \pmod 4$}}.
\end{cases}
$$
The points $(\beta,y_2,\ldots,y_m)$ of $Q$ are of two types, according as
$\beta=0$, or $\beta\neq 0$.
To count the points of $Q$ with $\beta=0$, consider the quadric $Q'$ of
in the $m-1$ dimensional projective space $PG(m-1,q)$ with coordinates
$(y_2,\ldots,y_m,t)$, whose (affine) equation is
\begin{equation}
\label{eq22ter} -(1+\varepsilon^2)y_2^2+\varepsilon(1+\varepsilon^2)y_2-
\textstyle\frac{1}{4}(1+\varepsilon^2)^2+\sum_{i=3}^my_i^2=0.
\end{equation}

The homogeneous equation of $Q'$ is
\begin{equation}
\label{eq12Ater} -(1+\varepsilon^2)y_2^2+\varepsilon(1+\varepsilon^2)y_2t-
\textstyle\frac{1}{4}(1+\varepsilon^2)^2t^2+\sum_{i=3}^my_i^2=0.
\end{equation}

The matrix associated with $Q'$ has determinant
$\frac{1}{4}(1+\varepsilon^2)^2$ which falls in\, $\Box$. Thus, $Q'$ is
elliptic for $q\equiv 3 \pmod 4$ and $m\equiv 2 \pmod 4$, otherwise it is
hyperbolic. The points of $Q'$ at infinity are those of the quadric $S'$ of
$PG(m-2,q)$ with homogeneous equation
\begin{equation}
\label{eq12Afor} -(1+\varepsilon^2)y_2^2-\sum_{i=3}^my_i^2=0.
\end{equation}
The matrix associated with $S'$ has determinant $-(1+\varepsilon^2)$
that does not vanish. Therefore, $S'$ is a parabolic quadric and it has
$(q^{m/2-1}+1)(q^{m/2-1}-1)/(q-1)$ points. Let $\sigma_0$ be the number of
points of  $Q'$. From Lemma \ref{quadrics}, if $m\equiv 0 \pmod 4$ then
$\sigma_0=[(q^{m/2}-1)(q^{m/2-1}+1)-(q^{m/2-1}+1)(q^{m/2-1}-1)]/(q-1)=
q^{m/2-1}(q^{m/2-1}+1)$. Hence, if $m\equiv 0 \pmod 4$, then
$$\sigma_0=q^{m/2-1}(q^{m/2-1}+1).$$
Similar computation shows that if $m\equiv 2 \pmod 4$, then
$$\sigma_0=
\begin{cases}
{\mbox{$q^{m/2-1}(q^{m/2-1}+1)$\,\, for $q\equiv 1 \pmod 4$}},\\
{\mbox{$q^{m/2-1}(q^{m/2-1}-1)$\,\, for $q\equiv 3 \pmod 4$}}.
\end{cases}
$$
$[0]$ is the number of affine points of $Q$ with $\beta =0$
  plus half of the number of affine points of $Q$ with $\beta \not= 0$,
hence $[0]=\sigma=\ha(N_q-n_q+\sigma_0)$, and the assertion follows.
\end{proof}

Let us remark that $\sigma_0$ is the number of those $y_1,\ldots ,y_m$, for
which $\beta=\gamma=0$ holds.

\section{Computation of $[\gamma^2]$ and their sum}
\label{PrB}
In this section, $\gamma$ is always a non-zero element in $\mathbb{F}_q$.
Also, for shortness, we use the symbol ${\bf{y}}$ to denote any ordered
$m$-tuple $(y_1,y_2,\ldots,y_m)$ with $y_i \in \mathbb{F}_q$. We first want
to compute the number of ${\bf{y}}$ satisfying Equation (\ref{eq3}) for a
fixed $\gamma^2$. This number was denoted by
$[\gamma^2]$ in Section \ref{reform}.
 It should be noticed that $[0]$ denoted the number of affine points of $Q$ with $\beta=0$ plus half the number of affine points $Q$ with $\beta\neq 0$
As in Section \ref{PrA},
Equation (\ref{eq3}) is viewed as the (affine) equation of a quadric
$Q_\gamma$. So, we first homogenize the equation and consider the quadric
in $PG(m,q)$ with variables $(y_1,\ldots y_m,t)$. This homogenized equation is
$$ y_1^2+\varepsilon^2 y_2^2+2\varepsilon y_1y_2-y_1t(1+\varepsilon^2-\gamma^2)-y_2t\varepsilon
(1+\varepsilon^2-\gamma^2)+t^2\frac{1}{4}(1+\varepsilon^2-\gamma^2)^2-\gamma^2
\sum_{i=1}^my_i^2=0.
$$
The matrix $M_\gamma$ of $Q_\gamma$ has (at most) the following non-zero
elements:
\begin{equation}
\label{eq20oct}
\begin{cases}
a_{11}=(1-\gamma^2),a_{12}=a_{21}=\varepsilon,
a_{22}=(\varepsilon^2-\gamma^2),
a_{33}=\ldots=a_{m,m}=-\gamma^2,\\
a_{1,m+1}=a_{m+1,1}=-\frac{1}{2}(1+\varepsilon^2-\gamma^2),
a_{2,m+1}=a_{m+1,2}= -\frac{1}{2}\varepsilon(1+\varepsilon^2-\gamma^2),\\
a_{m+1,m+1}=\frac{1}{4}(1+\varepsilon^2-\gamma^2)^2.
\end{cases}
\end{equation}
A straightforward computation gives
$$\det(M_\gamma)=\frac{1}{4}(-\gamma^2)^{m}(1+\varepsilon^2-\gamma^2)^2,$$
which shows that $\det(M_\gamma)\in \Box$.
Let $g_q=[\gamma^2]$.
\begin{lemma}
\label{lem20octA}  If $m\equiv 0 \pmod 4$, then
\begin{equation}
\label{eq4} g_q=
\begin{cases} {\mbox{$q^{(m/2)-1}(q^{m/2}-1)$ for $\gamma^2-(1+\varepsilon^2)
\in\Box$,}}\\
 {\mbox{$q^{(m/2)-1}(q^{m/2}+1)$ for $\gamma^2-(1+\varepsilon^2)\in\,
\not\hspace{-0.1cm}\Box$.}}
 \end{cases}
\end{equation}
If $m\equiv 2 \pmod 4$ and $q\equiv 1 \pmod 4$, then
\begin{equation}
\label{eq4bis} g_q=
\begin{cases} {\mbox{$q^{m/2-1}(q^{m/2}-1)$ for $\gamma^2-(1+\varepsilon^2)
\in\Box,$}}\\
 {\mbox{$q^{m/2-1}(q^{m/2}+1)$ for $\gamma^2-(1+\varepsilon^2)\in\,
\not\hspace{-0.1cm}\Box$.}}
 \end{cases}
\end{equation}
If $m\equiv 2 \pmod 4$ and $q\equiv 3 \pmod 4$, then
\begin{equation}
\label{eq4bisbis} g_q=
\begin{cases}  {\mbox{$q^{m/2-1}(q^{m/2}+1)$ for $\gamma^2-(1+\varepsilon^2)
\in\Box $,}}\\
{\mbox{$q^{m/2-1}(q^{m/2}-1)$ for $\gamma^2-(1+\varepsilon^2)\in\,
\not\hspace{-0.1cm}\Box$.}}
 \end{cases}
\end{equation}
\end{lemma}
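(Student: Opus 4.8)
The plan is to repeat, almost verbatim, the point-counting argument already used in Section~\ref{PrA}. Equation~(\ref{eq3}) is presented as the affine equation of the quadric $Q_\gamma\subseteq PG(m,q)$ with coordinates $(y_1,\ldots,y_m,t)$ and matrix $M_\gamma$. Since each affine solution ${\bf y}$ corresponds bijectively to a projective point of $Q_\gamma$ with $t\neq 0$, the quantity $g_q=[\gamma^2]$ equals $N_q-n_q$, where $N_q$ is the number of points of $Q_\gamma$ and $n_q$ is the number of its points at infinity. Thus the whole computation reduces to identifying the projective types of $Q_\gamma$ and of the quadric it cuts out on the hyperplane $t=0$, and then reading off the point counts from Lemma~\ref{quadrics}.

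First I would record that $Q_\gamma$ is non-singular. Indeed $\det(M_\gamma)=(-4\gamma^2)^m(1+\varepsilon^2-\gamma^2)^2$ vanishes only if $\gamma=0$ or $\gamma^2=1+\varepsilon^2$; the former is excluded, and the latter is impossible because $\gamma^2\in\Box$ whereas $1+\varepsilon^2\in\,\not\hspace{-0.1cm}\Box$ by the choice of $\varepsilon$. As $m$ is even, Lemma~\ref{quadrics}(i) forces $Q_\gamma$ to be parabolic, so $N_q=(q^{m/2}+1)(q^{m/2}-1)/(q-1)$ is the \emph{same} in every case. In particular the entire dependence of $g_q$ on the class of $\gamma^2-(1+\varepsilon^2)$ must be carried by the points at infinity.

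Next I would compute the quadric at infinity. Setting $t=0$ in the homogeneous form of $Q_\gamma$ gives the quadric $S_\gamma\subseteq PG(m-1,q)$ with equation $(1-\gamma^2)y_1^2+2\varepsilon y_1y_2+(\varepsilon^2-\gamma^2)y_2^2-\gamma^2\sum_{i=3}^m y_i^2=0$. Its associated matrix is block-diagonal: a $2\times 2$ block in $(y_1,y_2)$ together with $-\gamma^2$ repeated $m-2$ times. Expanding the $2\times 2$ determinant gives $(1-\gamma^2)(\varepsilon^2-\gamma^2)-\varepsilon^2=\gamma^2(\gamma^2-(1+\varepsilon^2))$, so that $\det=\gamma^{2(m-1)}(\gamma^2-(1+\varepsilon^2))$, which again never vanishes; hence $S_\gamma$ is non-singular and $n_q$ is governed by Lemma~\ref{quadrics}. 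Since $\gamma^{2(m-1)}\in\Box$, the quadratic class of this determinant equals that of $\gamma^2-(1+\varepsilon^2)$, which is exactly the distinction appearing in the statement.

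The decisive and most delicate step is to convert this determinant into the elliptic/hyperbolic dichotomy through Lemma~\ref{quadrics}(ii) for the odd value $r=m-1$, tracking the sign $(-1)^{m/2}$ together with whether $-1\in\Box$. Carrying this out case by case (exactly as for the quadric $S$ in Section~\ref{PrA}) should show that $S_\gamma$ is hyperbolic precisely when $(-1)^{m/2}(\gamma^2-(1+\varepsilon^2))\in\Box$ and elliptic otherwise; a short substitution into $N_q-n_q$ then collapses to $g_q=q^{m/2-1}(q^{m/2}-1)$ in the hyperbolic case and $g_q=q^{m/2-1}(q^{m/2}+1)$ in the elliptic case. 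Finally, matching the sign $(-1)^{m/2}$ against the parities $m\equiv 0,2\pmod 4$ and $q\equiv 1,3\pmod 4$ reproduces (\ref{eq4}), (\ref{eq4bis}) and (\ref{eq4bisbis}). I expect the main obstacle to be precisely this bookkeeping: correctly interpreting the $(-1)^{r/2}$ exponent of Lemma~\ref{quadrics}(ii) for odd $r$, and keeping the quadratic characters of $-1$, of $1+\varepsilon^2$, and of $\gamma^2-(1+\varepsilon^2)$ straight across all four cases.
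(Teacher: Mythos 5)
Your proposal is correct and follows essentially the same route as the paper: both view (\ref{eq3}) as the affine part of the parabolic quadric $Q_\gamma$ in $PG(m,q)$, subtract the points of the quadric $S_\gamma$ at infinity in $PG(m-1,q)$, whose determinant $\gamma^{2(m-1)}(\gamma^2-(1+\varepsilon^2))$ makes it hyperbolic or elliptic exactly according to the sign condition $(-1)^{m/2}(\gamma^2-(1+\varepsilon^2))\in\Box$, yielding $g_q=q^{m/2-1}(q^{m/2}\mp 1)$ in the two cases. Your observation that $\det(M_\gamma)\neq 0$ because $\gamma^2\in\Box$ while $1+\varepsilon^2\in\,\not\hspace{-0.1cm}\Box$ is a point the paper leaves implicit, but otherwise the arguments coincide.
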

\begin{proof} We argue as in Section \ref{PrA}. Since $\det(M_\gamma)\in \Box$,
in the $m$-dimensional projective space $PG(m,q)$ the (non-degenerate,
parabolic) quadric $Q_\gamma$ has
$$N_q=(q^{m/2}+1)(q^{m/2}-1)/(q-1)$$
points; see Lemma \ref{quadrics}.
Next we find the number $t_q$ of points of $Q_\gamma$ at infinity by
substituting $t=0$. Such points of $Q_\gamma$ are those of the quadric
$S_\gamma$ in $PG(m-1,q)$ with homogeneous equation
 \begin{equation}
\label{20octeq3}
(1-\gamma^2)y_1^2+2\varepsilon y_1y_2+(\varepsilon^2-\gamma^2)y_2^2-
\gamma^2\sum_{i=3}^my_i^2=0.
\end{equation}
The associated matrix has (at most) the following non-zero elements:
$$
b_{11}=(1-\gamma^2),b_{12}=b_{21}=\varepsilon, b_{22}=(\varepsilon^2-\gamma^2),
b_{33}=\ldots=b_{m,m}=-\gamma^2.
$$
and its determinant does not vanish being equal to
$\gamma^2(\gamma^2-(1+\varepsilon^2))(-\gamma^2)^{m-2}$.

Two cases are distinguished according as  $m\equiv 0 \pmod 4$ or
$m\equiv 2 \pmod 4$. In the former case, $S_\gamma$ is hyperbolic for
$\gamma^2-(1+\varepsilon^2)\in \Box$ and elliptic for
$\gamma^2-(1+\varepsilon^2)\in\not\hspace{-0.1cm}\Box$. Hence, if
$m\equiv 0 \pmod 4$ then
$$t_q=
\begin{cases}
{\mbox{$(q^{{m/2}-1}+1)(q^{m/2}-1)/(q-1)$ for $\gamma^2-(1+\varepsilon^2)
\in\Box$}},\\
{\mbox{$(q^{m/2}+1)(q^{{m/2}-1}-1)/(q-1)$ for $\gamma^2-(1+\varepsilon^2)
\in\,\not\hspace{-0.1cm} \Box$}}.
\end{cases}
$$
In the latter case, if $q\equiv 1 \pmod 4$ then $S_\gamma$ is hyperbolic
for $\gamma^2-(1+\varepsilon^2)\in \Box$ and is elliptic for $\gamma^2-
(1+\varepsilon^2)\in\not\hspace{-0.1cm} \Box$. If  $q\equiv 3 \pmod 4$,
then the same holds provided that the adjectives hyperbolic and elliptic
are interchanged.
Therefore, if $m\equiv 2 \pmod 4$ and $q\equiv 1 \pmod 4$ then
$$t_q=
\begin{cases}
{\mbox{$(q^{{m/2}-1}+1)(q^{m/2}-1)/(q-1)$ for $\gamma^2-(1+\varepsilon^2)
\in\Box$}},\\
{\mbox{$(q^{m/2}+1)(q^{{m/2}-1}-1)/(q-1)$ for $\gamma^2-(1+\varepsilon^2)
\in\,\not\hspace{-0.1cm} \Box$}}
\end{cases}
$$
but if $m\equiv 2 \pmod 4$ and $q\equiv 3 \pmod 4$ then
$$t_q=
\begin{cases}
{\mbox{$(q^{m/2}+1)(q^{{m/2}-1}-1)/(q-1)$ for $\gamma^2-(1+\varepsilon^2)
\in \Box$}},\\
{\mbox{$(q^{{m/2-1}}+1)(q^{m/2}-1)/(q-1)$ for $\gamma^2-(1+\varepsilon^2)
\in\,\not\hspace{-0.1cm}\Box$}}.
\end{cases}
$$
Since $g_{q}=[\gamma^2]=N_q-t_q$, the assertion follows.
\end{proof}

The next lemma helps us determine how many times $\gamma^2-(1+\varepsilon^2)$
is a square and a non-square.

\begin{lemma}
\label{lemC14oct} For a fixed non-zero element $s\in \mathbb{F}_q$, let
$R_q$ be the number of $\gamma^2$ for which  the equation
$$\gamma^2-s\tau^2=1+\varepsilon^2$$
has a solution $0\not= \gamma\in \mathbb{F}_q$ for some $\tau\in \mathbb{F}_q$. Then
\begin{equation}
\label{20octeq41}
R_q=
\begin{cases}{\mbox{$\frac{1}{4}(q-1)$ for $q\equiv 1 \pmod 4 $,}}\\
{\mbox{$\frac{1}{4}(q-3)$ for $s\in \Box$ and $q\equiv 3 \pmod 4$}},\\
{\mbox{$\frac{1}{4}(q+1)$ for $s\in\,\not\hspace{-0.1cm} \Box$ and
$q\equiv 3 \pmod 4$}}.
\end{cases}
\end{equation}
\end{lemma}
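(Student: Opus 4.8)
The plan is to read the equation $\gamma^2 - s\tau^2 = 1+\varepsilon^2$ as a non-singular affine conic in the plane with coordinates $(\gamma,\tau)$, to count all of its affine points by the same ``total minus at infinity'' device used in Sections \ref{PrA} and \ref{PrB}, and then to recover $R_q$ by analysing the fibres of the map $(\gamma,\tau)\mapsto\gamma^2$. Throughout write $c=1+\varepsilon^2$ and recall that $c\in\not\Box$ by the choice of $\varepsilon$; let $\chi$ denote the quadratic character of $\mathbb{F}_q$, so that $\chi(x)=1,-1,0$ according as $x\in\Box$, $x\in\not\Box$, $x=0$.

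First I would count the number $N$ of pairs $(\gamma,\tau)\in\mathbb{F}_q^2$ with $\gamma^2-s\tau^2=c$. Homogenising to $\gamma^2-s\tau^2-ct^2=0$ in $PG(2,q)$, the associated diagonal matrix has determinant $sc\neq 0$, so the conic is non-singular; being a quadric in even-dimensional $PG(2,q)$ it is parabolic and has $q+1$ points by Lemma \ref{quadrics}. The points at infinity satisfy $\gamma^2=s\tau^2$ on the line $t=0$: there are two of them ($\gamma=\pm\sqrt{s}\,\tau$) when $s\in\Box$ and none when $s\in\not\Box$, i.e. $1+\chi(s)$ in all cases. Hence $N=(q+1)-(1+\chi(s))=q-\chi(s)$.

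Next I would relate $N$ to $R_q$. Since $\gamma\neq 0$ throughout this section, the value $u=\gamma^2$ ranges over nonzero squares, and $R_q$ is exactly the number of distinct such $u$ for which $\tau^2=(u-c)/s$ is solvable. Every admissible $u$ yields precisely four solutions $(\gamma,\tau)$: two roots $\gamma=\pm\sqrt{u}$ because $u\neq 0$, and two roots $\tau=\pm\sqrt{(u-c)/s}$ because $(u-c)/s\neq 0$ — here I use that $u\in\Box$ while $c\in\not\Box$, so $u\neq c$. Thus $4R_q$ equals the number of solutions with $\gamma\neq 0$. The solutions with $\gamma=0$ satisfy $-s\tau^2=c$, that is $\tau^2=-c/s$, of which there are $1+\chi(-c/s)\in\{0,2\}$; writing $\epsilon_0=\tfrac12\bigl(1+\chi(-c/s)\bigr)\in\{0,1\}$, I obtain
$$R_q=\frac{N-2\epsilon_0}{4}=\frac{q-\chi(s)-2\epsilon_0}{4}.$$

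Finally I would evaluate $\epsilon_0$. From $\chi(c)=-1$ and $\chi(1/s)=\chi(s)$ one gets $\chi(-c/s)=-\chi(-1)\chi(s)$, where $\chi(-1)=1$ or $-1$ according as $q\equiv 1$ or $q\equiv 3\pmod 4$. Substituting the four combinations of $\chi(s)=\pm1$ and $\chi(-1)=\pm1$ into the displayed formula gives the stated values: for $q\equiv 1\pmod 4$ both choices of $s$ collapse to $\tfrac14(q-1)$, while for $q\equiv 3\pmod 4$ one finds $\tfrac14(q-3)$ for $s\in\Box$ and $\tfrac14(q+1)$ for $s\in\not\Box$. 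The only genuinely delicate point is the bookkeeping of the previous paragraph: one must exclude the fibre $\gamma=0$ (justified by the standing convention $\gamma\neq 0$) and must check that every admissible fibre has size exactly four, which is where the non-squareness of $c=1+\varepsilon^2$ is essential; once this is pinned down, the remaining character computation is routine.
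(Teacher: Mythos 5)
Your proposal is correct and follows essentially the same route as the paper: both count the affine points of the conic $\gamma^2-s\tau^2=1+\varepsilon^2$ (splitting into the cases of two or zero points at infinity according as $s$ is a square or not), remove the points on the line $\gamma=0$, and divide by four. Your write-up merely makes explicit two steps the paper leaves implicit — that each admissible value of $\gamma^2$ contributes exactly four points, which uses $1+\varepsilon^2$ being a non-square to rule out $\tau=0$, and the quadratic-character computation deciding how many points lie on $\gamma=0$ — so it is a faithful, slightly more detailed version of the paper's argument.
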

\begin{proof} In $AG(2,q)$ with coordinates $(X,Y)$, the equation
$X^2-sY^2=1+\varepsilon^2$ defines an (irreducible) conic $\cC$ which is
a hyperbola or an ellipse according as $s\in \Box$ or
$s\in\,\not\hspace{-0.1cm} \Box$. In the hyperbola case, $\cC$ has
$q-1$ points. Moreover, either $0$ or $2$ of these points lie on the line
$X=0$ according as $q\equiv 1 \pmod 4$ or $q\equiv 3 \pmod 4$. Therefore,
if $s\in\Box$ then either $R_q=\frac{1}{4}(q-1)$ or $R_q=\frac{1}{4}(q-3)$
according as $q\equiv 1 \pmod 4$ or $q\equiv 3 \pmod 4$.
In the elliptic case, $\cC$ has as many as $q+1$ points, and either $2$ or
$0$ of them lie on the line $X=0$ depending on $q\equiv 1 \pmod 4$ or
$q\equiv 3 \pmod 4$. Therefore, if $s\in\,\not\hspace{-0.1cm} \Box$ then
either $R_q=\frac{1}{4}(q-1)$ or $R_q=\frac{1}{4}(q+1)$ according as
$q\equiv 1 \pmod 4$ or $q\equiv 3 \pmod 4$.
\end{proof}
Lemmas \ref{lem20octA} and \ref{lemC14oct} have the following corollary.
It can be seen immediately by splitting the sum $\sum [\gamma^2]$ into
two subsums
$$\sum [\gamma^2]=\sum_{\gamma^2-(1+\varepsilon^2)\in \Box} [\gamma^2]+
\sum_{\gamma^2-(1+\varepsilon^2)\in \not\Box} [\gamma^2].
$$
\begin{lemma}
\label{lemoct20}
$$\sum [\gamma^2]=
\begin{cases}
{\mbox{$\ha q^{m-1}(q-1)$ for $m\equiv 0 \hspace{-0.25cm}\pmod 4$ and
$q\equiv 1 \hspace{-0.25cm} \pmod 4$,}}\\
{\mbox{$\ha q^{m/2-1}(q^{m/2+1}-q^{m/2}+2)$ for $m\equiv 0 \hspace{-0.25cm}
\pmod 4$ and $q\equiv 3 \hspace{-0.25cm}\pmod 4$,}}\\
{\mbox{$\ha q^{m-1}(q-1)$ for $m\equiv 2 \hspace{-0.25cm} \pmod 4$ and
$q\equiv 1 \hspace{-0.25cm} \pmod 4$,}}\\
{\mbox{$\ha q^{m/2-1}(q^{m/2+1}-q^{m/2}-2)$ for $m\equiv 2 \hspace{-0.25cm}
\pmod 4$ and $q\equiv 3 \hspace{-0.25cm} \pmod 4$.}}
\end{cases}
$$
\end{lemma}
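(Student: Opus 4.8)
The plan is to evaluate $\sum[\gamma^2]$ as a weighted sum, taking the weights from Lemma \ref{lem20octA} and the multiplicities from Lemma \ref{lemC14oct}. First I would note that as $\gamma$ runs over $\mathbb{F}_q\setminus\{0\}$, the value $\gamma^2$ runs over the non-zero squares, so the sum has exactly $\ha(q-1)$ terms. By Lemma \ref{lem20octA} the summand $[\gamma^2]$ takes only two values, according as $\gamma^2-(1+\varepsilon^2)$ lies in $\Box$ or in $\not\hspace{-0.1cm}\Box$; the remaining possibility $\gamma^2-(1+\varepsilon^2)=0$ cannot occur, since $1+\varepsilon^2\in\,\not\hspace{-0.1cm}\Box$ is a non-square whereas $\gamma^2$ is a square. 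Thus every term of the sum is unambiguously of one of the two types.

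Accordingly I would set $A$ to be the number of non-zero squares $\gamma^2$ with $\gamma^2-(1+\varepsilon^2)\in\Box$ and $B$ the number with $\gamma^2-(1+\varepsilon^2)\in\,\not\hspace{-0.1cm}\Box$, so that $A+B=\ha(q-1)$, and compute $A$ and $B$ via Lemma \ref{lemC14oct}. Choosing a fixed square $s$, the product $s\tau^2$ ranges over $\Box\cup\{0\}$, and since the value $0$ is again excluded (it would force $\gamma^2=1+\varepsilon^2$), the quantity $R_q$ of that lemma counts precisely the non-zero squares $\gamma^2$ with $\gamma^2-(1+\varepsilon^2)\in\Box$; that is $A=R_q$ for $s\in\Box$. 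Symmetrically, taking $s\in\,\not\hspace{-0.1cm}\Box$ gives $B=R_q$ for a non-square $s$. Reading off (\ref{20octeq41}) then yields $A=B=\qa(q-1)$ when $q\equiv 1\pmod 4$, and $A=\qa(q-3)$, $B=\qa(q+1)$ when $q\equiv 3\pmod 4$.

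It then remains to combine these ingredients as $\sum[\gamma^2]=A\,g^{+}+B\,g^{-}$, where $g^{+}$ and $g^{-}$ denote the two values of $[\gamma^2]$ furnished by Lemma \ref{lem20octA} in the cases $\gamma^2-(1+\varepsilon^2)\in\Box$ and $\gamma^2-(1+\varepsilon^2)\in\,\not\hspace{-0.1cm}\Box$ respectively. In every case the factor $q^{m/2-1}$ comes out and the bracket reduces to $(A+B)q^{m/2}\pm(B-A)=\ha(q-1)q^{m/2}\pm(B-A)$, where $B-A=0$ for $q\equiv 1\pmod 4$ and $B-A=1$ for $q\equiv 3\pmod 4$; substituting the four pairs $(A,B)$ reproduces the four claimed formulas. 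The only real care needed — and hence the main obstacle — is the bookkeeping: one must pair the correct row of Lemma \ref{lemC14oct} with the correct row of Lemma \ref{lem20octA}, and in particular keep track of the fact that for $m\equiv 2\pmod 4$, $q\equiv 3\pmod 4$ the assignment of the $g_q$-values to $\Box$ and $\not\hspace{-0.1cm}\Box$ is interchanged (compare (\ref{eq4}) and (\ref{eq4bis}) with (\ref{eq4bisbis})), which flips the sign of the $(B-A)$ term.
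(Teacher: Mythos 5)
Your proposal is correct and follows exactly the route the paper intends: the paper states this lemma as an immediate corollary of Lemmas \ref{lem20octA} and \ref{lemC14oct}, and your argument is precisely that computation carried out in detail, with $A=R_q$ ($s\in\Box$), $B=R_q$ ($s\in\,\not\hspace{-0.1cm}\Box$) weighting the two values of $[\gamma^2]$, including the correct handling of the interchanged case (\ref{eq4bisbis}). All four resulting expressions match the claimed formulas, so there is nothing to add.
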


\section{Special values of $\gamma^2$}
\label{spec}

Our next step is to compute the number of all ${\bf{y}}$ such that (\ref{eq3})
vanishes for exactly one $\gamma^2$. As we have seen in Section \ref{reform},
the special cases arise for $\beta=0$ and $\beta+\gamma=0$. Otherwise, for
every $\beta^2$ there are exactly two $\gamma^2$'s. Note that one of the two
$\gamma^2$'s in this case can be $0$.  As the case $\gamma^2=0$
was already considered in Section \ref{PrA}, we restrict our attention in this
section to $\gamma^2\not= 0$.

\subsection{Case of $y_1^2+y_2^2+\ldots+y_m^2=0$ (that is $\beta+\gamma=0$).}
\label{case1}
In this case, from (\ref{eq3}), $y_1=\ha
(1+\varepsilon^2-\gamma^2)-\varepsilon y_2$ follows. Substituting this into
$y_1^2+y_2^2+\ldots+y_m^2=0$ gives
\begin{equation}
\label{eq8} (1+\varepsilon^2)y_2^2+y_3^2+\ldots + y_m^2-\varepsilon(1+\varepsilon^2-\gamma^2)y_2+\textstyle\frac{1}{4}(1+\varepsilon^2-\gamma^2)^2=0.
\end{equation}
The converse also holds, if $(y_2,\ldots,y_m)$ is a solution of (\ref{eq8})
and $y_1=\ha (1+\varepsilon^2-\gamma^2)-\varepsilon y_2$, then
$\beta+\gamma =0$ and (\ref{eq3}) vanishes for just one $\gamma^2$. 
\begin{lemma}
\label{13Aoct2018} Let $r_q$ be the number of all ${\bf{y}}$ with
$y_1^2+y_2^2+\ldots +y_m^2=0$ such that (\ref{eq3}) vanishes for
exactly one $\gamma^2$. Then
$$r_q=
\begin{cases} {\mbox{$\ha (q-1)q^{m/2-1}(q^{m/2-1}+1)$ for
$m\equiv 0 \pmod 4$}},\\
{\mbox{$\ha (q-1) q^{m/2-1}(q^{m/2-1}+1)$ for $m\equiv 2 \pmod 4$ and
$q\equiv 1 \pmod 4$}},\\
{\mbox{$\ha(q-1) q^{m/2-1}(q^{m/2-1}-1)$ for $m\equiv 2 \pmod 4$ and
$q\equiv 3 \pmod 4$}}.
\end{cases}
$$
\end{lemma}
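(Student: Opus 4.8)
The plan is to count the $m$-tuples $\mathbf{y}$ described in the lemma by parametrising them through the auxiliary value $\gamma^2$ together with the quadric (\ref{eq8}). First I would record the bijection already prepared in Section \ref{case1}: for an $\mathbf{y}$ with $y_1^2+\cdots+y_m^2=0$ for which (\ref{eq3}) vanishes for exactly one $\gamma^2$, the unique such $\gamma^2$ equals $-\ha g_2$ and lies in $\Box$ (since $\gamma\neq 0$), while $\mathbf{y}$ is recovered from $(y_2,\ldots,y_m)$ via $y_1=\ha(1+\varepsilon^2-\gamma^2)-\varepsilon y_2$. Conversely, each solution $(y_2,\ldots,y_m)$ of (\ref{eq8}) for a fixed $\gamma^2\in\Box$ produces exactly one such $\mathbf{y}$, and distinct pairs $(\gamma^2,(y_2,\ldots,y_m))$ give distinct $\mathbf{y}$: if two pairs yield the same $\mathbf{y}$, then the $(y_2,\ldots,y_m)$ components agree, whence the defining formula for $y_1$ forces the two values of $\gamma^2$ to coincide. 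This establishes $r_q=\sum_{\gamma^2\in\Box}N(\gamma^2)$, where $N(\gamma^2)$ is the number of affine solutions $(y_2,\ldots,y_m)$ of (\ref{eq8}).

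Next I would evaluate $N(\gamma^2)$ by the quadric method of Section \ref{PrA}. Homogenising (\ref{eq8}) with a variable $t$ gives a quadric in $PG(m-1,q)$ whose associated matrix is block diagonal: the coordinates $y_3,\ldots,y_m$ contribute $m-2$ unit diagonal entries, and the $2\times 2$ block in $y_2,t$ has determinant $\qa(1+\varepsilon^2-\gamma^2)^2$, so the full determinant equals $\qa(1+\varepsilon^2-\gamma^2)^2$. Because $\gamma^2\in\Box$ while $1+\varepsilon^2$ is a non-square, one has $\gamma^2\neq 1+\varepsilon^2$, so this determinant is a non-zero square; in particular the quadric is non-singular and its type does not depend on the chosen $\gamma^2$. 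Since $m-1$ is odd, Lemma \ref{quadrics} (applied exactly as to the quadric $S$ in Section \ref{PrA}) shows the quadric is hyperbolic precisely when $(-1)^{m/2}$ is a square. This gives hyperbolic for $m\equiv 0\pmod 4$ and any $q$, and for $m\equiv 2\pmod 4$ with $q\equiv 1\pmod 4$; and elliptic for $m\equiv 2\pmod 4$ with $q\equiv 3\pmod 4$.

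Then I would subtract the points at infinity. Setting $t=0$ leaves the quadric $(1+\varepsilon^2)y_2^2+y_3^2+\cdots+y_m^2=0$ in $PG(m-2,q)$, which is non-singular of even projective dimension, hence parabolic with $(q^{m-2}-1)/(q-1)$ points by Lemma \ref{quadrics}. Subtracting this from the total point count of the hyperbolic (resp. elliptic) quadric and simplifying yields $N(\gamma^2)=q^{m/2-1}(q^{m/2-1}+1)$ in the hyperbolic case and $N(\gamma^2)=q^{m/2-1}(q^{m/2-1}-1)$ in the elliptic case, in each case independent of $\gamma^2$. Finally, since $|\Box|=\ha(q-1)$, I multiply by $\ha(q-1)$ to obtain $r_q=\ha(q-1)q^{m/2-1}(q^{m/2-1}+1)$ in the first case and $r_q=\ha(q-1)q^{m/2-1}(q^{m/2-1}-1)$ in the second, which is exactly the claimed value across the three regimes.

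The step I expect to be the main obstacle is the correct determination of the quadric type of (\ref{eq8}) in the three regimes, since this is exactly where the parity of $m/2$ and the quadratic character of $-1$ interact, and where the hypothesis that $1+\varepsilon^2$ is a non-square is used twice: once to guarantee non-degeneracy through $\gamma^2\neq 1+\varepsilon^2$, and once, through $(-1)^{m/2}$, to decide hyperbolic versus elliptic. The remaining arithmetic—the determinant evaluation, the two projective point counts, and the final multiplication by $\ha(q-1)$—is routine once the type is fixed and, reassuringly, parallels the computation already carried out for $Q'$ and $S'$ in Section \ref{PrA}.
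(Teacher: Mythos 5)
Your proposal is correct and follows essentially the same route as the paper: for each fixed $\gamma^2\in\Box$ you homogenise (\ref{eq8}) into a quadric in $PG(m-1,q)$ with square determinant $\frac{1}{4}(1+\varepsilon^2-\gamma^2)^2$, use Lemma \ref{quadrics} to identify it as elliptic exactly when $m\equiv 2\pmod 4$ and $q\equiv 3\pmod 4$ (hyperbolic otherwise), subtract the parabolic quadric at infinity, and multiply by $|\Box|=\ha(q-1)$ — which is precisely the paper's argument. Your extra remarks (injectivity of the pairing $(\gamma^2,(y_2,\ldots,y_m))\mapsto{\bf y}$ and non-degeneracy via $\gamma^2\neq 1+\varepsilon^2$) only make explicit what the paper leaves implicit.
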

\begin{proof} We look at (\ref{eq8}) as the equation of the (affine)
quadric $Q_\gamma$ of $AG(m-1,q)$, with affine coordinates $y_2,\ldots,y_m$
(and parameter $\gamma^2$), whose matrix $B_\gamma$ has (at most) the
following non-zero elements:
\begin{equation*}
b_{11}=(1+\varepsilon^2),b_{22}=\ldots=b_{m-1,m-1}=1,
b_{1,m}=b_{m,1}=-\textstyle\frac{1}{2}\varepsilon(1+\varepsilon^2-\gamma^2), \\
b_{m,m}=\textstyle\frac{1}{4}(1+\varepsilon^2-\gamma^2)^2.
\end{equation*}
A straightforward computation shows that
$$\det(B_\gamma)=\textstyle\frac{1}{4}(1+\varepsilon^2-\gamma^2)^2$$
which is in $\Box.$ As $m-1$ is odd, $Q_\gamma$ is an elliptic quadric
for $m\equiv 2 \pmod 4$ and $q\equiv 3 \pmod 4$, otherwise it is
hyperbolic. If $m_\gamma$ is the number of points of $Q_\gamma$ in
$PG(m-1,q)$, then, by Lemma \ref{quadrics},
$$ m_\gamma=
\begin{cases} {\mbox{$(q^{m/2}-1)(q^{m/2-1}+1)/(q-1)$ for
$m\equiv 0 \pmod 4$}},\\
{\mbox{$(q^{m/2}-1)(q^{m/2-1}+1)/(q-1)$ for $m\equiv 2 \pmod 4$ and
$q\equiv 1 \pmod 4$}},\\
{\mbox{$(q^{m/2}+1)(q^{m/2-1}-1)/(q-1)$ for $m\equiv 2 \pmod 4$ and
$q\equiv 3 \pmod 4$}}.
\end{cases}
$$
Now, we count the points of $Q_\gamma$ at infinity. They are the points
$(y_2,\ldots,y_m)$ satisfying the quadratic equation
 $$(1+\varepsilon^2)y_2^2+y_3^2+\ldots +y_m^2=0.$$
This equation defines a quadric $Q_\gamma'$ of
$PG(m-2,q)$ whose matrix has non-zero determinant $(1+\varepsilon^2)$.
Since $m-2$ is even, $Q_\gamma'$ is a parabolic quadric and its number
of points is equal to $t_0=(q^{m/2-1}+1)(q^{m/2-1}-1)/(q-1)$. Therefore,
$Q_\gamma$ has exactly  $M_\gamma=m_\gamma-t_0$ affine points in $AG(m-1,q)$
where
 $$M_\gamma=
\begin{cases} {\mbox{$q^{m/2-1}(q^{m/2-1}+1)$ for $m\equiv 0 \pmod 4$}},\\
{\mbox{$q^{m/2-1}(q^{m/2-1}+1)$ for $m\equiv 2 \pmod 4$ and $q\equiv 1
\pmod 4$}},\\
{\mbox{$q^{m/2-1}(q^{m/2-1}-1)$ for $m\equiv 2 \pmod 4$ and
$q\equiv 3 \pmod 4$}}.
\end{cases}
$$
Since $\gamma$ is supposed to be non-zero in this section, if $\gamma^2$ ranges in $\Box$, we  obtain $r_q=\frac{1}{2}(q-1)M_\gamma$, and
the assertion follows.
 \end{proof}
\subsection{Case of $y_1^2+y_2^2+\ldots+y_m^2=\gamma^2$ (that is $\beta=0$).}
In this case, (\ref{15octeq2}) implies $\beta =0$, hence (\ref{15octeq1}) becomes
$-2y_1-2\varepsilon y_2+(1+\varepsilon^2)+y_1^2+\ldots + y_m^2=0$.
Combining the two equations yields
\begin{equation}
\label{eq28oct} y_1=\ha (1+\varepsilon^2+\gamma^2)-\varepsilon y_2.
\end{equation}
Eliminating $y_1$ from (\ref{eq28oct}) and
$y_1^2+y_2^2+\ldots+y_m^2-\gamma^2=0$ gives
\begin{equation}
\label{eq10} (1+\varepsilon^2)y_2^2+y_3^2+\ldots y_m^2-\varepsilon(1+\varepsilon^2+\gamma^2)y_2+ \textstyle\frac{1}{4}(1+\varepsilon^2+\gamma^2)^2-\gamma^2=0.
\end{equation}
The converse also holds, if $(y_2,\ldots,y_m)$ is a solution of (\ref{eq10})
and $y_1=\ha (1+\varepsilon^2+\gamma^2)-\varepsilon y_2$, then $\beta=0$ and
(\ref{eq3})
vanishes for exactly one $\gamma^2$.
\begin{lemma}
\label{13Boct2018} For a non-zero element $\gamma\in \mathbb{F}_q$, let
$\ell_q$ be the number of all ${\bf{y}}$ with
$y_1^2+y_2^2+\ldots +y_m^2=\gamma^2$ such that (\ref{eq3}) vanishes only
for $\gamma^2$. Then
$$\ell_q=
\begin{cases} {\mbox{$\ha (q-1)q^{m/2-1}(q^{m/2-1}+1)$ for $m\equiv 0
\pmod 4$}},\\
{\mbox{$\ha (q-1) q^{m/2-1}(q^{m/2-1}+1)$ for $m\equiv 2 \pmod 4$ and
$q\equiv 1 \pmod 4$}},\\
{\mbox{$\ha(q-1) q^{m/2-1}(q^{m/2-1}-1)$ for $m\equiv 2 \pmod 4$ and
$q\equiv 3 \pmod 4$}}.
\end{cases}
$$
\end{lemma}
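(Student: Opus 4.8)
The plan is to follow verbatim the strategy of Lemma~\ref{13Aoct2018}, exploiting that the converse established just before the statement identifies $\ell_q$ with the number of affine points of the quadric defined by~(\ref{eq10}), summed over the admissible values of $\gamma^2$. Concretely, for a fixed non-zero $\gamma$ I would read~(\ref{eq10}) as the affine equation of a quadric $Q_\gamma$ in $AG(m-1,q)$ with coordinates $y_2,\dots,y_m$; each affine point of $Q_\gamma$, together with $y_1=\ha(1+\varepsilon^2+\gamma^2)-\varepsilon y_2$, yields exactly one $\mathbf y$ for which~(\ref{eq3}) vanishes only at this $\gamma^2$, and distinct $\gamma^2$ give distinct tuples because $y_1$ depends on $\gamma^2$. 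Hence $\ell_q=\sum_{\gamma^2\in\Box} M_\gamma$, where $M_\gamma$ is the number of affine points of $Q_\gamma$, and it remains to compute $M_\gamma$ as the number of projective points of $Q_\gamma$ minus the number of its points at infinity.

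Homogenising~(\ref{eq10}) with a variable $t$, the associated matrix $B_\gamma$ has the non-zero entries
\[
b_{11}=1+\varepsilon^2,\quad b_{22}=\dots=b_{m-1,m-1}=1,\quad
b_{1,m}=-\ha\varepsilon(1+\varepsilon^2+\gamma^2),\quad
b_{m,m}=\qa(1+\varepsilon^2+\gamma^2)^2-\gamma^2 .
\]
Only the coordinates $y_2$ and $t$ are coupled, so $\det(B_\gamma)$ reduces to the determinant of the $2\times2$ block in $\{y_2,t\}$. Writing $a=1+\varepsilon^2$ and using $a-\varepsilon^2=1$, expansion gives $\det(B_\gamma)=\qa(a+\gamma^2)^2-a\gamma^2=\qa(a-\gamma^2)^2$, that is
\[
\det(B_\gamma)=\qa(1+\varepsilon^2-\gamma^2)^2 .
\]
This is a non-zero square: it is a square by inspection, and it is non-zero because $1+\varepsilon^2\in\not\hspace{-0.1cm}\Box$ forces $\gamma^2\neq 1+\varepsilon^2$. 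Thus $\det(B_\gamma)\in\Box$, exactly the value obtained in the proof of Lemma~\ref{13Aoct2018}.

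Since $Q_\gamma$ lives in the same dimension $m-1$ (odd) and has a determinant in $\Box$, it is non-singular of the very same type as in Lemma~\ref{13Aoct2018}: elliptic for $m\equiv2\pmod4$ and $q\equiv3\pmod4$, and hyperbolic otherwise; hence its point count $m_\gamma$ coincides with the one there. Moreover, setting $t=0$ annihilates every $\gamma$-dependent term, so the points at infinity satisfy the identical equation $(1+\varepsilon^2)y_2^2+y_3^2+\dots+y_m^2=0$, defining the same parabolic quadric $Q_\gamma'$ of $PG(m-2,q)$ with $t_0=(q^{m/2-1}+1)(q^{m/2-1}-1)/(q-1)$ points. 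Therefore $M_\gamma=m_\gamma-t_0$ agrees with the value in Lemma~\ref{13Aoct2018}, and summing over the $\ha(q-1)$ squares $\gamma^2\in\Box$ gives $\ell_q=\ha(q-1)M_\gamma$, which is the claimed formula.

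The one genuinely new point—and the only place where something could go wrong—is the determinant computation: replacing $1+\varepsilon^2-\gamma^2$ by $1+\varepsilon^2+\gamma^2$ in the linear part and adding the constant $-\gamma^2$ could a priori change the square-class of $\det(B_\gamma)$ and therefore the projective type of $Q_\gamma$. The content of the lemma is precisely that these two changes cancel, collapsing $\det(B_\gamma)$ back to $\qa(1+\varepsilon^2-\gamma^2)^2$; once this identity is verified, the entire counting is word-for-word that of Lemma~\ref{13Aoct2018}.
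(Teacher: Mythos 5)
Your proposal is correct and follows essentially the same route as the paper: the paper also reads (\ref{eq10}) as an affine quadric of $AG(m-1,q)$, records the same matrix entries, states $\det(C_\gamma)=\qa(1+\varepsilon^2-\gamma^2)^2$, and then closes by invoking the arguments of the case $y_1^2+\cdots+y_m^2=0$ (Lemma \ref{13Aoct2018}). Your version merely makes explicit what the paper leaves implicit, namely the $2\times 2$ block computation showing the determinant collapses to $\qa(1+\varepsilon^2-\gamma^2)^2$, its non-vanishing via $1+\varepsilon^2\in\not\hspace{-0.1cm}\Box$, and the final summation over $\gamma^2\in\Box$.
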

\begin{proof} We look at (\ref{eq10}) as the equation of the (affine)
quadric $Q_\gamma$ of $AG(m-1,q)$, with affine coordinates $y_2,\ldots,y_m$
(and parameter $\gamma^2$), whose matrix $C_\gamma$ has (at most) the
following non-zero elements:
$$
\begin{cases}
c_{11}=1+\varepsilon^2,c_{22}=\ldots=c_{m-1,m-1}=1,\\
c_{1,m}=c_{m,1}=-\ha\varepsilon(1+\varepsilon^2+\gamma^2), c_{m,m}=\textstyle\frac{1}{4}(\varepsilon^4+2\varepsilon^2\gamma^2+2\varepsilon^2+\gamma^4-2\gamma^2+1).
\end{cases}
$$
As before, $$\det(C_\gamma)=\textstyle\frac{1}{4}(1+\varepsilon^2-\gamma^2)^2.$$
Therefore, the proof can be ended by the arguments used in subsection
\ref{case1}.
\end{proof}

\section{Proof of Theorem \ref{pr1}}

Compute the sum $[0]+\sum[\gamma]^2$. This counts almost every solution
$(y_1,\ldots ,y_m)$ of the system of equations (\ref{eq1}) and (\ref{eq2})
twice. The solutions counted only once are the special values treated in
the previous section and the doubly special values $\beta=\gamma=0$ mentioned
in Section \ref{prA}. Therefore,
$$\mu =\frac{1}{2}\left([0]+\sum [\gamma^2]+r_q+\ell_q+\sigma_0 \right).$$

\section*{Acknowledgments}

In the first part of this research, the authors were partially supported by
the Slovenian-Hungarian OTKA-ARRS Grant NN 114614 (in Hungary) and N1-0032
(in Slovenia). That project ended in May, 2019. Afterwards, the third author
was supported by Project no. ED$\underbar{}$ 18-1-2019-0030
(Application-specific highly reliable IT solutions), which has been
implemented with the support provided from the National Research, Development
and Innovation Fund of Hungary, financed under the Thematic Excellence
Programme funding scheme. This work was also partially supported by GNSAGA of INdAM. 

We are grateful to the anonymous referees for good questions,
useful suggestions and for pointing out several inaccuracies in the earlier
version of this paper.

\section*{References}


\begin{thebibliography}{99}
\bibitem{AB} A.~Blokhuis, On subsets of $GF(q^2)$ with square differences,
\emph{Indag. Math. (N.S.)} {\bf{46}} (1984), 369-372.
\bibitem{dagi} P.~Brass, W.~Moser and J.~Pach,
\emph{Research Problems in Discrete Geometry}, Springer-Verlag, 2005.
\bibitem{ht} J.W.P.~Hirschfeld and J.A.~Thas,
 \emph{General Galois Geometries}, Oxford Univ. Press, Oxford, 1991, xiv+407 pp.
\bibitem{ISX} A.~Iosevich, I.E.~Shparlinski and M.~Xiong, Sets with
integral distances in finite fields, \emph{Trans. Am. Math. Soc.} {\bf{362}}
(2010), 2189-2204.
\bibitem{KK} M.~Kiermaier and S.~Kurz, Maximal integral point sets in
affine planes over finite field, \emph{Discrete Math.} {\bf{309}} (2009),
4564-4575.
\bibitem{K} S.~Kurz, Integral point sets over finite fields,
\emph{Australas. J. Comb.} {\bf{43}} (2007), 3-29.
\bibitem{KM} S.~Kurz and H.~Meyer, Integral point sets in higher
dimensional affine spaces over finite fields, \emph{J. Combin. Theory Ser. A}
{\bf{116}} (2009), 1120-1139.
\bibitem{KR} I.~Kov\'acs and J.~Ruff, Integral automorphisms of affine
planes over finite fields, \emph{Finite Fields Appl.} {\bf{27}} (2014), 104-114.
\bibitem{KKRSz} I.~Kov\'acs, K.~Kutnar, J.~Ruff and  T.~Sz\H{o}nyi,
Integral automorphisms of affine spaces over finite fields,
\emph{Des. Codes Cryptogr.} {\bf{84}} (2017), 181-188.
\end{thebibliography}
\end{document}